\newtheorem{thm}{Theorem}
\newtheorem{lemma}{Lemma}
\newtheorem{e-proposition}{Proposition}
\newtheorem{e-definition}{Definition\rm}
\newtheorem{remark}{Remark\/}
\newtheorem{exemple}{\it Example\/}
\def\Div{\mathrm{div}}
\def\s{\mathrm{sign}}
\title{Exact controllability to trajectories for entropy solutions to scalar conservation laws in several space dimensions}
\author{Carlotta Donadello$^1$ \and Vincent Perrollaz$^2$}
\begin{document}
\maketitle
\footnotetext[1]{Universit\'e de Bourgogne Franche-Comt\'e, Laboratoire de Math\'ematiques, CNRS
UMR6623, 16 route de Gray, 25000 Besan\c{c}on, \texttt{carlotta.donadello@univ-fcomte.fr}}
\footnotetext[2]{Institut Denis Poisson, Universit\'e de Tours, CNRS UMR7013, Université
d'Orléans, Parc de Grandmont, 37000 Tours \texttt{vincent.perrollaz@lmpt.univ-tours.fr}}

\begin{abstract}
    \selectlanguage{english}
    We describe a new method which allows us to obtain a result of exact
    controllability to trajectories of multidimensional conservation laws in the context of entropy
    solutions and under a mere non-degeneracy assumption on the flux and a natural
    geometric condition.
\end{abstract}

\section{Introduction}
\label{sec:introduction}
In this paper we consider a scalar conservation law in several space dimensions, i.e.\ a partial differential equation of the form 
\begin{equation*}
    \label{eq:claw}
    \partial_t u +\Div_x \left(f(u)\right) = 0, \quad t\in\mathbb{R}^+, \quad x\in\Omega\subset\mathbb{R}^d, \: d\geq 1,
\end{equation*}
where $\Omega$ is an open set with smooth boundary ($\mathcal{C}^2$ is sufficient), and $f$, the flux function, is in $\mathcal{C}^1(\mathbb{R}, \mathbb{R}^d)$.

We are interested in the following controllability problem. Given an initial datum $u_0\in L^\infty(\Omega)$, a suitable target profile $u_1$ and a positive time $T$, we aim at constructing an entropy weak solution $u \in L^\infty( \mathbb{R}^+ \times \mathbb{R}^d; \mathbb{R})$ of 
\begin{equation}
    \label{eq:problem}
    \begin{cases}
        \partial_t u +\Div_x \left(f(u)\right) = 0, &\quad \text{in } (0, T)\times \Omega\\
        u(0, x) = u_0(x), &\quad \text{on }\Omega, \\
        u(T, x) = u_1(x), &\quad \text{on }\Omega, \\
    \end{cases}
\end{equation}
by using the boundary data on $(0,T)\times\partial\Omega$ as controls.

Given any extensive quantity $u$ defined on a domain $\Omega$,  such as mass or energy,  a
conservation law  for $u$ translates into a partial differential equation the physical
observation that the total amount of $u$ in $\Omega$ changes at a rate which corresponds to the
net flux of $u$, $f(u)$, through the boundary $\partial\Omega$. This kind of equations is
widely used in modeling phenomena such as gas dynamics (Euler equations), electromagnetism, magneto-hydrodynamics, shallow water, combustion, road traffic, population dynamics and  petroleum engineering. 

It is well known that even starting from initial data in $\mathcal{C}_c^\infty(\mathbb{R}^d)$ the classical solutions of \eqref{eq:claw} can develop singularities (jump discontinuities) in finite time,  see \cite{D} for a very complete introduction to the field. 

The most general wellposedness result for classical solutions to the Cauchy problem states that for any initial datum $u_0$ in $H^s$, with $s>1+\frac{d}{2}$, there exists a solution of \eqref{eq:claw} in $\mathcal{C}^0([0,T], H^s)\cap\mathcal{C}^1([0,T], H^{s-1})$, whose life span $T$ can be estimated depending on $f$ and $u_0$.

However, most of the literature devoted to conservation laws focuses on a class of weak
(distributional) solutions which  satisfies an additional selection criterium, necessary to ensure uniqueness
, called entropy condition. In the case of a scalar conservation law in several space dimensions a complete wellposedness theory for entropy solutions to the Cauchy problem is due to Kruzhkov, \cite{K}.

The problem we aim at solving, see \eqref{eq:problem}, can be adressed both in the framework of
classical or of entropy solutions. In the first case controls, besides driving the state to the
target, are also responsible of preventing the formation of singularities. Several results exist in this framework,  see \cite{Ch}, \cite{Li} and \cite{Co} for a survey. Unfortunately, this approach does not allow to attain many physically relevant states involving jump discontinuities  and leads to control strategies which are in general not  very robust. Indeed very small perturbations of the control might lead to blow up of the derivatives of the solution before time $T$.

In the present paper we are interested in controllability of entropy solutions. The literature in this framework is less abundant  also due to specific technical difficulties, even if we can notice a growing interest of researchers in this field. The classical methodology for exact controllability relies heavily on linearization which is not possible (or at least horribly technical) anymore around discontinuous solutions. Moreover, Bressan and Coclite showed in \cite{BC} that nonlinear conservation laws may fail the linear test. Indeed they provided a system for which the linearized approximation around a constant state is controllable, while the original nonlinear system cannot reach that same constant state in finite time.

A separate issue is related to the irreversibility of entropy solutions: the set of admissible target states in time $T$ is reduced and its description, often involving a number of highly technical conditions, is in itself an open problem in most cases, see \cite{ADGR, ADM, AC, AM, CM}. 


In the existing literature we can distinguish essentially three approaches toward the
study of  exact controllability for conservation laws in one space dimension (consider equation \eqref{eq:claw} with $d=1$).

Starting from the  pioneering work by Ancona and Marson, \cite{AM}, several results have been obtained  using the theory of generalized characteristics introduced by Dafermos in \cite{Daf2}, as 
\cite{ADM, AM, CM, H, P1} or the explicit Lax-Oleinik representation formula, as \cite{ADGR, AGG}. The latter technique is applicable only when the flux function $f$ is strictly convex/concave, while the theory of generalized characteristics covers also the (slightly) more general case of a flux function $f$ with one inflection point. 

The return method introduced by Coron, \cite{Co}, is the basis of the approach developed by
Horsin in \cite{H} and, combined with the classical vanishing viscosity method, plays a key
role in  \cite{GG} and in the only paper to our knowledge  in which the flux function $f$ is allowed to have a
finite number of inflection points, \cite{L}. 

The asymptotic stabilization of entropy solutions to scalar conservation laws is the topic of
\cite{BLMPB, P2, P3}. 

The only available tool for investigating the exact controllability of systems of conservation laws in one space dimension is wave front tracking algorithm, \cite{Br
},  which has been successfully applied in \cite{AC, BC, G1, G2, LY}.

The asymptotic stabilization of entropy solutions to systems has been studied in  \cite{AM2, BCo, CEGGP}.

It seems difficult to investigate the exact controllability of entropy solutions of scalar
conservation laws in several space dimensions using the techniques designed for the one
dimensional case. In the present paper we propose a new approach, inspired by the work on stabilization by Coron, \cite{Co99}, and Coron, Bastin and d'And\'ea Novel, \cite{CBAN}, see also the monography \cite{BCo} for a comprehensive presentation of the method. The conditions we impose on the flux
function are technical and will be detailed in the next Section, but we stress that in the
special case $d=1$ they are not related to  convexity (or concavity). This means that even in the one dimensional case our
result is new, althought for this case stronger results are available under more restrictive hypothesis.

The first of our conditions, called later \emph{nondegeneracy condition}, says that the range of $u$ does not contain any interval on which $f$ is affine. This condition is necessary to ensure the existence of traces at the boundary of $\Omega$, see \cite{V}.

The second condition, called lated \emph{replacement condition} involves $f$ together with $T$ and $\Omega$. Roughly speaking, once we reduce to the one-dimensional case, it says that all generalized characteristics issued from points $(t,x)$ in $\{0\}\times \Omega$ leave the cylinder $(0,T)\times \Omega$ before time $T$, so that the dynamics in the domain only depends on the boundary data and not on the initial data for $t$ large enough.    

\section{Preliminary definitions and notations}
\label{sec:preliminaries}

In the following $u\mapsto \s(u)$ is the function given by
\begin{equation*}
    \forall u\in \mathbb{R},\qquad \s(u):=
    \begin{cases}
        1 & \text{if } u>0\\
        0 & \text{if }u=0\\
        -1 & \text{if }u<0,
    \end{cases}
\end{equation*}
$\langle \cdot|\cdot \rangle$ denotes the scalar product between two vectors and $\chi_{E}$ is
the indicator function of the set $E$.

\begin{e-definition}
    \label{defi:senseIni}
    Given $f \in\mathcal{C}^1\left(\mathbb{R} ; \mathbb{R}^d\right)$ and $u_0\in L^\infty\left(\Omega\right)$, we consider the equation
    \begin{equation}\label{eq:ref}
        \partial_t u+\Div(f(u))=0, \qquad\text{for } (t,x) \in (0,+\infty)\times\Omega,
    \end{equation}
    supplemented with the initial condition
    \begin{equation}\label{eq:ic}
        u(0, x) = u_0(x),  \qquad\text{for } x\in\Omega. 
    \end{equation}

    A function $u\in L^\infty\left([0,+\infty)\times \Omega\right)$ is an entropy solution  of
    \eqref{eq:ref}--\eqref{eq:ic} in $[0,T)\times \Omega$ if for any real number $k$ and any non negative function $\phi$ in $\mathcal{C}_c^1([0,+\infty)\times\Omega)$  we have
    \begin{equation}\label{eq:entropy}
        \begin{aligned}
            \int_{(0,T)\times \Omega } |u(t,x)-k| \partial_t \phi(t,x)+\s(u(t,x)-k) & \langle
            f(u(t,x))-f(k)|\nabla \phi(t,x)\rangle dt dx \\
                                                                                    &+ \int_{\Omega} \s(u_0(x)-k) \phi(0,x) dx \geq 0.
        \end{aligned}
    \end{equation}
\end{e-definition}
We will also say that a function $u$ is an entropy solution (without refering to any initial
data)  in $(0,+\infty)\times\Omega$ when it satisfies \eqref{eq:entropy} for any non negative $\phi \in \mathcal{C}_c^1((0,+\infty)\times
\Omega)$.

We now introduce a simple geometric condition which is sufficient (though clearly not
necessary) to obtain our controllability result.
\begin{e-definition}
    Let $\Omega$ be a smooth open set of $\mathbb{R}^d$, $I$ be a segment of $\mathbb{R}$ and
    $f:\mathbb{R} \to \mathbb{R}^d$ a  $\mathcal{C}^1$ flux function.
    We say that the triple $\left(f, \Omega, I\right)$ satisfies the replacement condition in time $\mathfrak{t}>0$ if there exists a vector $w\in \mathbb{R}^d$ and a positive number $c$ such that
    \begin{align}
        L:=\sup_{x\in \Omega}\langle w|x\rangle-\inf_{x\in \Omega}\langle w|x\rangle<+\infty,\\
        \forall u\in I,\qquad \langle f'(u)|w\rangle \geq c,  \quad\text{and } \mathfrak{t}=\frac{L}{c}.
    \end{align}
\end{e-definition}
\begin{e-definition}
    We say that the flux $f$ is non degenerate if for any couple 
    $(\tau, \zeta)\in \mathbb{R}\times \mathbb{R}^d \setminus \{(0,0)\}$ we have
    \begin{equation*}
        \mathcal{L}\left(\{z\in \mathbb{R}\ :\ \tau+\langle \zeta|f'(z)\rangle=0\}\right)=0,
    \end{equation*}
    where $\mathcal{L}$ is the Lebesgue measure.
\end{e-definition}
We can now state our main theorem  on exact controllability to trajectories for a conservation law in any space dimension.
\begin{thm}\label{th:main}
    Let $v\in \mathcal{C}^0((0,+\infty); L^1(\Omega))\cap L^\infty((0,+\infty)\times \Omega)$ be an
    entropy solution to \eqref{eq:ref} and $u_0$ be a function in $L^\infty(\Omega)$.

    We suppose that both $u_0$ and $v$ take values in a segment $I$ such that $\left(f, \Omega, I\right)$ satisfies the replacement condition in time $\mathfrak{t}$. We also suppose that the flux $f$ is non degenerate.

    Then for any times $T_1$ and $T_2$ larger than $\mathfrak{t}$, there exists an entropy solution 
    $u$ of \eqref{eq:ref} satisfying
    \begin{equation*}
        u(0,x)=u_0(x),\qquad u(T_1,x)=v(T_2,x)\qquad \text{for almost every }x\in \Omega.
    \end{equation*}
\end{thm}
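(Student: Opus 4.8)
The plan is to exploit the replacement condition to show that, after time $\mathfrak t$, an entropy solution in the cylinder forgets its initial data and is entirely determined by the boundary data we impose. Concretely, I would work in the scalar direction $w$ given by the replacement condition. Up to a linear change of variables we may assume $w = e_1$, so that $\langle f'(u)|w\rangle = f_1'(u) \geq c > 0$ for all $u \in I$, and $\Omega$ is contained in a slab $\{a < x_1 < b\}$ with $b - a = L$. The idea is that along the $x_1$-axis the equation behaves like a one-dimensional conservation law with a strictly increasing flux $f_1$; all characteristics move to the right with speed at least $c$, hence any information entering through $\{x_1 = a\}$ (or present at $t=0$) is swept past $\{x_1 = b\}$, i.e.\ out of $\Omega$, in time at most $L/c = \mathfrak t$.

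The construction proceeds in three steps. \emph{Step 1: a "flushing" solution.} Starting from $u_0$, solve the conservation law in $(0,\mathfrak t)\times\Omega$ with a constant boundary datum, say the constant $\kappa := \inf I$ (or any fixed value in $I$), imposed on the inflow part of $\partial\Omega$; by Kruzhkov theory (boundary-value version, e.g.\ Bardos--LeRoux--N\'ed\'elec / Otto) this has a unique entropy solution $u^\flat$ taking values in $I$, and by the finite-speed/monotone-transport argument above, $u^\flat(\mathfrak t,\cdot)\equiv\kappa$ on $\Omega$. \emph{Step 2: reaching $v(T_2,\cdot)$ by time reversal is not available}, so instead I build a solution \emph{backward from the target}: the trajectory $v$ restricted to $(T_2-\mathfrak t, T_2)\times\Omega$ is itself an entropy solution; call $u^\sharp(t,x) := v(t + T_2 - \mathfrak t,\, x)$ for $t\in(0,\mathfrak t)$, which takes values in $I$, satisfies the equation, and has $u^\sharp(\mathfrak t,\cdot) = v(T_2,\cdot)$. \emph{Step 3: concatenation.} Since $T_1 > \mathfrak t$, on the intermediate constant plateau $[\mathfrak t, T_1 - \mathfrak t + \mathfrak t] = [\mathfrak t, T_1]$... more carefully: define $u$ on $(0,T_1)\times\Omega$ by gluing $u^\flat$ on $(0,\mathfrak t)$, the constant state $\kappa$ on $(\mathfrak t, T_1 - \mathfrak t)$ (nonempty since $T_1 > \mathfrak t$... one needs $T_1 \geq 2\mathfrak t$; if $\mathfrak t < T_1 < 2\mathfrak t$ one shortens the flushing and target phases, using that the flushing is complete in \emph{at most} $\mathfrak t$ and can be started from $u_0$ at time $0$ and the target phase needs only run for $T_1 - (\text{flush time})$ — here one uses that $v$ is a global entropy solution so it can be translated arbitrarily in time), and then $u^\sharp$ shifted to live on $(T_1-\mathfrak t, T_1)$. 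Each piece is an entropy solution, each takes values in $I$, and the traces match at $t = \mathfrak t$ and $t = T_1 - \mathfrak t$ (both equal to the constant $\kappa$), so by the standard gluing lemma for entropy solutions (matching traces across a time slice, which holds precisely because the nondegeneracy of $f$ guarantees strong $L^1$ traces, cf.\ Vasseur) the concatenation is a global entropy solution of \eqref{eq:ref}. The boundary data it induces on $(0,T_1)\times\partial\Omega$ is the control.

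\textbf{Main obstacle.} The delicate point is Step 1, together with making rigorous the claim that the replacement condition forces the solution to become constant in time $\mathfrak t$. One must (i) set up the correct notion of entropy solution with prescribed inflow boundary data and know it is well-posed with values constrained to $I$; (ii) prove the transport/flushing estimate — that with $\langle f'(u)|w\rangle \geq c$ on $I$, the solution on the slab coincides with the boundary-imposed constant after time $L/c$. The cleanest route to (ii) is a Kru\v{z}hkov-type $L^1$-contraction / comparison argument using the entropy inequality tested against Lipschitz weights adapted to the slab coordinate $\langle w|x\rangle$: comparing $u^\flat$ with the constant $\kappa$ and tracking the functional $\int_\Omega |u^\flat(t,x) - \kappa|\,\psi(\langle w|x\rangle)\,dx$ for a suitable increasing $\psi$, one gets exponential-in-space decay that, combined with the strict lower bound $c$ on the transport speed, yields exact extinction at $t = \mathfrak t = L/c$. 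A secondary technical nuisance is the gluing lemma at matching times and the precise bookkeeping when $\mathfrak t < T_1 < 2\mathfrak t$ (and likewise for $T_2$); this is handled by noting that the flushing is complete in time \emph{at most} $\mathfrak t$ — indeed in time exactly $L/c$ measured from when data enters — so the constant-plateau phase can be made to have any nonnegative length, and the target-replaying phase $u^\sharp$ can be taken from any window $(T_2 - s, T_2)$ of the global trajectory $v$ with $s \leq \mathfrak t$.
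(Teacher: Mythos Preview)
Your Step~3 contains a genuine gap: the traces do \emph{not} match at the second gluing time $t=T_1-\mathfrak t$. From the left you have the constant $\kappa$, but from the right you have $u^\sharp(0,\cdot)=v(T_2-\mathfrak t,\cdot)$, and there is no reason whatsoever for the target trajectory $v$ to equal $\kappa$ at time $T_2-\mathfrak t$. Since entropy solutions take their time-traces in the strong $L^1$ sense, a mismatch across a time slice produces a function that is simply not an entropy solution on the full cylinder. You correctly noted that time reversal is unavailable, but then in effect you assume its conclusion anyway (that $v$ can be driven to a constant backward in time).

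The fix, which is precisely what the paper does, is to upgrade your flushing estimate from ``any solution with constant boundary data becomes that constant after time $\mathfrak t$'' to the stronger synchronization statement: \emph{any two entropy solutions with the same (arbitrary) boundary data and possibly different initial data in $I$ coincide for all $t\ge\mathfrak t$}. Your own weighted-$L^1$ Gronwall argument (the functional $\int_\Omega |u-\kappa|\,\psi(\langle w|x\rangle)\,dx$) proves this verbatim once you replace the constant $\kappa$ by a second solution $v$ and use the $L^1$ comparison principle for the IBVP; the key inequality $\s(a-b)\langle f(a)-f(b)|w\rangle\ge c|a-b|$ holds for any $a,b\in I$, not just when one of them is constant. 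With this in hand no gluing is needed: one solves a \emph{single} IBVP with initial data $u_0$ and boundary data equal to the trace of the time-shifted trajectory $v(\cdot+T_2-T_1,\cdot)$ (or, when $T_1>T_2$, an arbitrary constant on $(0,T_1-\mathfrak t)$ followed by the trace of $v(\cdot+T_2-T_1,\cdot)$), and the synchronization proposition forces this solution to agree with the shifted $v$ at time $T_1$. Your bookkeeping worries about $\mathfrak t<T_1<2\mathfrak t$ then disappear as well.
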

\begin{remark}
    For the sake of simplicity, we omit to write here the exact form of the controls we use. In
    the next Section we precise in which sense the boundary condition on $\partial\Omega$ are
    taken into account by entropy solutions and in the last Section, in the proof of Theorem~\ref{th:main}, we write our controls in a fully explicit way.

\end{remark}
\begin{remark}
    A characterization of the set of admissible target profiles at fixed time  time $T\geq 0$ for
    a scalar conservation law in several space dimensions is not available in the literature. We
    stress however that in the statement of Theorem~\ref{th:main} we really need to assume that $v$ is an entropy solution on the cylinder $(0,+\infty)\times \Omega$ because the complete knowledge of $v$ is necessary in our proof.   
\end{remark}

\section{Boundary conditions and entropy solutions}
\label{sec:boundary}

We have so far avoided the precise formulation of boundary conditions for \eqref{eq:ref}. In general, given the initial boundary value problem 
\begin{equation}\label{eq:sysBord}
    \begin{cases}
        \partial_t u + \Div(f(u))=0,\qquad& (0,+\infty)\times \Omega,\\
        u(t,x)=u_b(t,x),\qquad   &(t,x)\in (0,+\infty)\times \partial \Omega,\\
        u(0,x)=u_0(x),\qquad    & x\in \Omega,
    \end{cases}
\end{equation}
its entropy solution $u$  does not satisfy the boundary condition in the usual sense, as the trace of $u$ on $\partial \Omega$ does not coincide with the prescribed Dirichlet datum. The situation is easier to visualize in the one dimensional case as we can see in the following exemple.
\begin{exemple}
    Assume $d=1$, $\Omega = (0,+\infty)$, $f(u) = \dfrac{u^2}{2}$ and impose in \eqref{eq:sysBord}  constant initial and boundary data,  $u_0 = -1$ and $u_b=-\dfrac{1}{2}$. Then the initial condition is transported along characteristic curves with negative slope up to the boundary, while no characteristic can spring out of the boundary itself. The trace of the solution at $x= 0$ can only take the value $u(t, 0^+) = u_0$, and  $u_b$ can not be attained.    
\end{exemple}
For this reason the boundary conditions should be interpreted in a broader sense, made precise
by Leroux, \cite{Le}, and Bardos, Leroux and N\'ed\'elec, \cite{BLN}. In the setting of the
exemple above we say that the boundary condition is fulfilled  in the sense of
Bardos-Leroux-N\'ed\'elec as soon as the solution  to the Riemann problem with data $u_L = u_b$
and $u_R = u(t, 0^+)$ only contains waves of non positive speed (i.e.\ waves which do not enter the domain).  
In the general multidimensional case this takes the following form.
\begin{e-definition}
    Let $I(a,b)$ denote the interval of extrema $a$ and $b$, and let $\eta(x)$ be the outer unit normal of $\partial \Omega$ at $(t,x)\in (0,T)\times \partial \Omega$. Then we say that the boundary condition $u_b$ in the IBVP \eqref{eq:sysBord} is fulfilled at $(t,x)\in (0,T)\times \partial \Omega$ if for any $k \in I(u_b(t,x), u(t,x))$
    \begin{equation*}\label{eq:BNL}
        \mathrm{sign}(u(t,x) - u_b(t,x)) \left( f(u(t,x))\cdot\eta(x) - f(k)\cdot \eta(x)\right) \geq 0. 
    \end{equation*}
\end{e-definition}
We have to precise, however, that the above definition is not exactly the one we adopt in the present work as existence of traces is not guaranted for solution of \eqref{eq:sysBord} in the $L^\infty$ setting. The first results dealing
with this problem were by Otto, \cite{Ot}, see also \cite{MNR}. We use more recent results by Ammar, Carillo and Wittbold, \cite{ACW}, which build upon those ideas. We also recall a (simplified version of a) result by Vasseur, \cite{V}, showing that if the flux satisfies the non degeneracy condition then any entropy solution $u\in L^\infty$ admits a trace at the boundary. 

Let us recall some definitions and results in \cite{ACW}. We use the following notations.
For any real numbers $\alpha$ and $k$, and any point $x\in \partial \Omega$, we call  $\eta(x)$ the outer unit normal at $x$ and introduce 
\begin{equation*}
    \omega^+(x,k,\alpha):=\max_{k\leq r,s\leq \max(\alpha,k)}|\langle f(r)-f(s)|\eta(x)\rangle|,
\end{equation*}
\begin{equation*}
    \omega^-(x,k,\alpha):=\max_{\min(\alpha,k)\leq r,s\leq k}|\langle f(r)-f(s)|\eta(x)\rangle|.
\end{equation*}
For integrals on the boundary we denote the surface measure  at $x\in \partial \Omega$ by $d\sigma(x)$.
\begin{e-definition}\label{defi:senseComplet}
    Given a boundary condition $u_b\in L^\infty((0,+\infty)\times \partial \Omega)$ and an initial
    data $u_0\in L^\infty(\Omega)$ we say that $u$ is an entropy solution to \eqref{eq:sysBord} 
    when the following hold for any $k\in\mathbb{R}$ and any non negative function $\zeta\in \mathcal{C}^\infty_c([0,+\infty)\times \mathbb{R}^d)$
    \begin{multline}\label{eq:plus}
        \int_0^T\int_\Omega  {(u(t,x)-k)^+\partial_t \zeta(t,x)+\chi_{\{u(t,x)>k\}}\langle f(u(t,x))-f(k))|\nabla \zeta(t,x)\rangle \,dx \,dt}\\
        +\int_{\partial \Omega} \int_0^T {\zeta(t,x)\omega^+(x,k,u_b(t,x)) \, dt\, d\sigma(x)}+\int_\Omega {(u_0(x)-k)^+ \zeta(0,x)\, dx}\geq 0,
    \end{multline}

    \begin{multline}\label{eq:moins}
        \int_0^T \int_\Omega {(k-u(t,x))^+\partial_t \zeta(t,x)+\chi_{\{u(t,x)<k\}}\langle f(u(t,x))-f(k))|\nabla \zeta(t,x)\rangle \,dx \, dt }\\
        +\int_{\partial \Omega}\int_0^T {\zeta(t,x)\omega^-(x,k,u_b(t,x)) \,dt\, d\sigma(x)}+\int_\Omega {(k-u_0(x))^+ \zeta(0,x)\, dx}\geq 0.
    \end{multline}
\end{e-definition}
The following two theorems were proven in \cite{ACW}  (see \cite{ACW}, Theorem 2.3 and 2.4).
\begin{thm}\label{thm:ACWexistence}
    Given initial and boundary data  $u_0\in L^\infty(\Omega)$ and  $u_b\in L^\infty((0,+\infty)\times \partial \Omega)$,
    there exists a unique entropy solution of \eqref{eq:sysBord}.
\end{thm}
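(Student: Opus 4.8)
Since this is a result of \cite{ACW}, I only outline the argument. Uniqueness and existence call for different tools, but in both the crux is that the boundary condition built into Definition~\ref{defi:senseComplet} is the Bardos--Leroux--N\'ed\'elec condition rather than a pointwise Dirichlet one---a condition which in the pure $L^\infty$ setting cannot even be expressed through a trace of $u$, whence the detour through the moduli $\omega^\pm$. For uniqueness (together with $L^1$-stability) the plan is to run Kruzhkov's doubling of variables directly on the families \eqref{eq:plus}--\eqref{eq:moins}: given two entropy solutions $u,u'$ of \eqref{eq:sysBord} with data $(u_0,u_b)$ and $(u_0',u_b')$, I would insert $k=u'(s,y)$ in the inequalities written for $u$ and $k=u(t,x)$ in those written for $u'$, test against a doubled function $\zeta(t,x,s,y)$ concentrating on the diagonal $\{(t,x)=(s,y)\}$, add the inequalities and let the concentration parameter tend to $0$. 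The interior terms should produce the Kato inequality
\begin{equation*}
    \partial_t|u-u'| + \Div_x\big(\s(u-u')(f(u)-f(u'))\big) \le 0
\end{equation*}
in $\mathcal{D}'((0,+\infty)\times\Omega)$, while the $\omega^\pm$ terms, by their very construction, should combine into a boundary contribution bounded by $C\int_{\partial\Omega}|u_b-u_b'|\,d\sigma$ and vanishing when $u_b=u_b'$. Integrating in space against a cut-off equal to $1$ on $\Omega$ and using the $L^1$-continuity of $t\mapsto u(t,\cdot)$, one then obtains
\begin{equation*}
    \|u(t,\cdot)-u'(t,\cdot)\|_{L^1(\Omega)} \le \|u_0-u_0'\|_{L^1(\Omega)} + C\int_0^t \|u_b(\tau,\cdot)-u_b'(\tau,\cdot)\|_{L^1(\partial\Omega)}\,d\tau ,
\end{equation*}
and uniqueness follows.

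For existence I would start from vanishing viscosity. When $u_0$ and $u_b$ are $\BV$, solve for $\varepsilon>0$ the uniformly parabolic problem $\partial_t u^\varepsilon+\Div_x(f(u^\varepsilon))=\varepsilon\Delta u^\varepsilon$ in $(0,+\infty)\times\Omega$, with $u^\varepsilon=u_b$ on $(0,+\infty)\times\partial\Omega$ and $u^\varepsilon(0,\cdot)=u_0$: the maximum principle gives a uniform $L^\infty$ bound, and a (more delicate) uniform $\BV$ bound up to the boundary yields compactness, so that $u^\varepsilon\to u$ along a subsequence. Multiplying the parabolic equation by Kruzhkov-type test functions and integrating by parts, the boundary-layer term $\varepsilon\,\partial_\eta u^\varepsilon$, whose contribution is controlled precisely because $u^\varepsilon$ is pinned to $u_b$ on $\partial\Omega$, produces in the limit a $\BV$ entropy solution satisfying the boundary condition in the Bardos--Leroux--N\'ed\'elec sense, equivalently in the sense of Definition~\ref{defi:senseComplet}; general $u_0,u_b\in L^\infty$ are then reached by $L^1$-density of $\BV$ data together with the contraction estimate above. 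An alternative, closer to the methods of Carrillo and Wittbold, is to build the solution by nonlinear semigroup theory: one shows that $u\mapsto\Div_x(f(u))$ supplemented with the boundary condition is $m$-accretive in $L^1(\Omega)$---solving the stationary problem $u+\lambda\,\Div_x(f(u))=g$ with that boundary condition and checking that the resolvent is an $L^1$-contraction---and then applies the Crandall--Liggett generation theorem.

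I expect the boundary analysis to be the main obstacle in both parts. In the uniqueness proof one must localize the doubled test function near $\partial\Omega$ so that the part of the boundary integral carrying the unfavourable sign is exactly the quantity estimated by the moduli $\omega^\pm$; in the existence proof one must follow the viscous boundary layer $\varepsilon\,\partial_\eta u^\varepsilon$ closely enough that these same integrals emerge with the correct sign in the limit. Away from the boundary, everything reduces to the by now classical Kruzhkov machinery.
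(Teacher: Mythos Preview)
The paper does not actually prove this theorem: it simply cites it as Theorems~2.3 and~2.4 of \cite{ACW} and moves on. There is therefore no in-paper argument to compare your proposal against. Your outline is a faithful high-level summary of the strategy in \cite{ACW}---Kruzhkov doubling adapted to the boundary formulation via the moduli $\omega^\pm$ for the comparison/uniqueness part, and a semigroup or approximation construction for existence---and is appropriate as a sketch of where the result comes from; just be aware that for the purposes of this paper nothing beyond the citation is required.
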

\begin{thm}
    Given initial data $u_0,\, v_0$ in $L^\infty(\Omega)$ and boundary  data $u_b,\, v_b$ in  $L^\infty((0,+\infty)\times \partial \Omega)$, the corresponding entropy solutions $u$ and $v$ satisfy
    \begin{multline}\label{eq:comparaison}
        \int_0^T \int_\Omega {(u(t,x)-v(t,x))^+\partial_t \zeta(t,x) + \chi_{\{u(t,x)> v(t,x)\}}\langle f(u(t,x))-f(v(t,x)))|\nabla \zeta(t,x)\rangle \,dx\,dt}\\
        +\int_{\partial \Omega}\int_0^T{\omega^-(x,u_b(t,x),v_b(t,x))\zeta(t,x)\,dt\,d\sigma(x)} +\int_\Omega
        {(u_0(x)-v_0(x))^+\zeta(0,x)\,dx}\geq 0.
    \end{multline}
    for any non negative function $\zeta\in \mathcal{C}^\infty_c([0,+\infty)\times \mathbb{R}^d)$.
\end{thm}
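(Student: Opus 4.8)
\textbf{Proof strategy for Theorem~\ref{th:main}.}

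The plan is to split the time interval $[0,T_1]$ at the intermediate time $\mathfrak{t}$ and to steer the solution in two stages: first, on $[0,\mathfrak{t}]$, use a cleverly chosen boundary control to make the state at time $\mathfrak{t}$ coincide with the trace of $v$ at the corresponding time; second, on $[\mathfrak{t},T_1]$, simply follow the (time-shifted) trajectory $v$. The key structural fact driving everything is the replacement condition: if $w$ and $c$ are as in the definition, then along the direction $w$ the flux component $\langle f(u)|w\rangle$ is strictly increasing with speed at least $c$ for all $u\in I$, so all ``generalized characteristics'' move in the $w$-direction and cross the slab of $w$-width $L$ in time at most $\mathfrak{t}=L/c$. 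Thus, after time $\mathfrak{t}$, the value of any entropy solution taking values in $I$ is completely determined by the boundary data on the inflow part of $\partial\Omega$ (the part where $\langle \eta(x)|w\rangle<0$), and the initial datum $u_0$ has been entirely flushed out.

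First I would make this ``flushing'' statement precise and quantitative. Let $u_b^\star(t,x)$ denote the trace of $v$ on $(0,+\infty)\times\partial\Omega$ (which exists by Vasseur's theorem since $f$ is nondegenerate and $v\in L^\infty$), and consider on $[0,\mathfrak{t}]$ the IBVP \eqref{eq:sysBord} with initial datum $u_0$ and boundary datum $u_b^\star(\cdot+T_2-\mathfrak{t},\cdot)$; call its unique entropy solution $u^{(1)}$ (existence and uniqueness by Theorem~\ref{thm:ACWexistence}). Similarly let $\tilde v(t,x):=v(t+T_2-\mathfrak{t},x)$, which is the entropy solution of the same IBVP but with initial datum $v(T_2-\mathfrak{t},\cdot)$ (note $T_2-\mathfrak{t}\geq 0$). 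Apply the comparison inequality \eqref{eq:comparaison} to the pair $u^{(1)},\tilde v$ with identical boundary data, so the boundary term drops (more precisely $\omega^-(x,u_b,v_b)=0$ when $u_b=v_b$), and choose $\zeta$ to be a cutoff that is $1$ on a large ball at time $\mathfrak{t}$, localised near $\{t=\mathfrak{t}\}$, and whose spatial gradient points ``against'' the flow in the $w$-direction. Running the resulting differential inequality together with the symmetric one for $(\tilde v,u^{(1)})$, and using that both solutions take values in $I$ so that $\langle f(u^{(1)})-f(\tilde v)|w\rangle$ has the same sign as $u^{(1)}-\tilde v$ up to the factor $c$, one obtains a Gronwall-type estimate forcing $u^{(1)}(\mathfrak{t},\cdot)=\tilde v(\mathfrak{t},\cdot)=v(T_2,\cdot)$ a.e.\ on $\Omega$. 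This is the analogue, in the BLN/ACW framework, of ``characteristics issued from $t=0$ have left the cylinder by time $\mathfrak{t}$'', and it is where the hypothesis $T_1,T_2\geq\mathfrak{t}$ is used.

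Next I would glue. Define $u$ on $[0,T_1]\times\Omega$ by setting $u:=u^{(1)}$ on $[0,\mathfrak{t}]$ and $u(t,x):=v(t+T_2-T_1,x)$ on $[\mathfrak{t},T_1]$; these agree at $t=\mathfrak{t}$ by the previous step (both equal $v(T_2-T_1+\mathfrak{t},x)=\tilde v(\mathfrak{t},x)$), and a standard argument shows that the concatenation of two entropy solutions matching along a time slice is again an entropy solution of \eqref{eq:ref} on the interior cylinder $(0,T_1)\times\Omega$ (check \eqref{eq:entropy} for test functions supported in $(0,+\infty)\times\Omega$ by splitting the $t$-integral at $\mathfrak{t}$ and noting the slice contributions cancel because the traces from both sides coincide). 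By construction $u(0,\cdot)=u_0$ and $u(T_1,\cdot)=v(T_2,\cdot)$ a.e., which is exactly the claim; the corresponding controls are the traces of $u$ on $(0,T_1)\times\partial\Omega$, namely $u_b^\star(\cdot+T_2-\mathfrak{t},\cdot)$ on $(0,\mathfrak{t})$ and $u_b^\star(\cdot+T_2-T_1,\cdot)$ on $(\mathfrak{t},T_1)$.

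The main obstacle is the first step: turning the heuristic ``all characteristics leave the cylinder before $\mathfrak{t}$'' into a rigorous statement at the level of $L^\infty$ entropy solutions with BLN boundary conditions, where one has no characteristics and only the weak formulations \eqref{eq:plus}–\eqref{eq:moins} and \eqref{eq:comparaison} to work with. The delicate point is the correct choice of test function $\zeta$ in \eqref{eq:comparaison}: it must simultaneously localise near the final time slice, be spatially large enough to capture all of $\Omega$ there, and have a gradient aligned with $-w$ so that the flux term $\chi_{\{u^{(1)}>\tilde v\}}\langle f(u^{(1)})-f(\tilde v)|\nabla\zeta\rangle$ has a favourable sign controlled below by $c\,(u^{(1)}-\tilde v)^+$; reconciling these requirements (especially handling the geometry of a general smooth $\Omega$, whose boundary need not be flat in the $w$-direction) while keeping the boundary integral nonpositive is the technical heart of the argument. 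Everything else — existence/uniqueness from Theorem~\ref{thm:ACWexistence}, existence of traces from Vasseur, and the gluing lemma — is routine.
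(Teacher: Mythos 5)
Your text does not prove the statement under review. The statement is the comparison (Kato-type) inequality \eqref{eq:comparaison} between two entropy solutions $u$ and $v$ of the initial--boundary value problem \eqref{eq:sysBord} with possibly different data $u_0,v_0,u_b,v_b$; in the paper this is not proved at all but quoted from Ammar--Carrillo--Wittbold \cite{ACW} (their Theorems 2.3 and 2.4). What you have written is a strategy for the main controllability result, Theorem~\ref{th:main}, and in the course of it you explicitly \emph{invoke} \eqref{eq:comparaison} (``Apply the comparison inequality \eqref{eq:comparaison} to the pair $u^{(1)},\tilde v$ \dots''). As a proof of the stated theorem this is therefore circular: the inequality you are asked to establish appears as a black-box ingredient, and nothing in your argument works toward deriving it.

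A genuine proof would have to start from the two half-entropy formulations \eqref{eq:plus} and \eqref{eq:moins} satisfied by $u$ and $v$ respectively, run Kruzhkov's doubling-of-variables argument with the semi-entropies $(u-k)^+$ and $(k-v)^+$ to obtain the interior terms $\int (u-v)^+\partial_t\zeta + \chi_{\{u>v\}}\langle f(u)-f(v)|\nabla\zeta\rangle$, and then control the boundary contribution by the modulus $\omega^-(x,u_b,v_b)$ using its definition as a maximum of $|\langle f(r)-f(s)|\eta(x)\rangle|$ over the relevant interval; the absence of strong traces in the $L^\infty$ setting is precisely why the $\omega^\pm$ quantities appear instead of pointwise Bardos--Leroux--N\'ed\'elec conditions. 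None of this machinery is present in your proposal. (Separately, your sketch of Theorem~\ref{th:main} is broadly in the spirit of the paper's actual proof of that theorem -- flushing via the replacement condition, then tracking the target trajectory -- but that is not the statement you were asked to prove, and the gluing you propose would still need the well-posedness of Theorem~\ref{thm:ACWexistence} rather than an ad hoc concatenation argument.)
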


Let us finally recall a simplified version of the result obtained by Vasseur in \cite{V},  which is sufficient for our use.
\begin{thm}\label{thm:Vasseur}
    Assume  that the flux $f$ is non degenerate and that the domain $\Omega$ is $\mathcal{C}^2$. 
    Then if $u\in L^\infty((0,+\infty)\times \Omega)$ is an entropy solution of \eqref{eq:ref} in the sense of
    Definition~\ref{defi:senseIni}, i.e. \eqref{eq:entropy} is satisfied for any $k$ and any non negative function $\phi$ in $\mathcal{C}_c^1((0,+\infty)\times\Omega)$, then there exists boundary data $u_b\in L^\infty((0,T)\times \partial \Omega)$ and initial data $u_0\in L^\infty(\Omega)$ such that $u$ is the unique entropy solution of the mixed problem \eqref{eq:sysBord} in the sense of Definition~\ref{defi:senseComplet}.
\end{thm}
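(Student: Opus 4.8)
The plan is to prove the statement in two stages. In the first, and more substantial, stage I would show that a non-degenerate entropy solution admits strong (i.e.\ $L^1_{loc}$) traces on the whole parabolic boundary of the cylinder, that is on $(0,T)\times\partial\Omega$ and on $\{0\}\times\Omega$; call them $u_b$ and $u_0$. In the second stage I would use these traces to verify the boundary entropy inequalities \eqref{eq:plus} and \eqref{eq:moins} of Definition~\ref{defi:senseComplet}. Once $u$ is known to be \emph{an} entropy solution of \eqref{eq:sysBord} with data $u_0,u_b$, it is automatically \emph{the} unique one by Theorem~\ref{thm:ACWexistence}.

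For the trace statement I would follow Vasseur's blow-up scheme \cite{V}, based on the kinetic formulation. Writing $\chi(v,w):=\chi_{\{0<v<w\}}-\chi_{\{w<v<0\}}$, the Kruzhkov inequalities \eqref{eq:entropy} are equivalent to the existence of a non-negative measure $m$ such that $h(t,x,v):=\chi(v,u(t,x))$ solves $\partial_t h+\langle f'(v)|\nabla_x h\rangle=\partial_v m$ in $\mathcal{D}'((0,+\infty)\times\Omega\times\mathbb{R})$. Fixing a.e.\ boundary point and using the $\mathcal{C}^2$ regularity of $\partial\Omega$ to flatten it locally, one rescales the solution at scale $\varepsilon\to0$ in the normal variable and obtains functions $u_\varepsilon$ on a fixed half-cylinder that still satisfy, to leading order, a kinetic transport equation. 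The non-degeneracy condition is precisely the hypothesis under which the relevant velocity-averaging lemma applies: it yields $L^1_{loc}$ compactness of $u_\varepsilon=\int_\mathbb{R}h_\varepsilon(\cdot,v)\,dv$, so that a subsequence converges strongly to a limit which, by a one-dimensional argument in the normal variable (again using non-degeneracy), is independent of the normal coordinate. This limit is the desired strong normal trace; running the same argument at $t=0$ gives the initial trace. One then sets $u_b:=\gamma u|_{(0,T)\times\partial\Omega}$ and $u_0:=\gamma u|_{\{0\}\times\Omega}$, both bounded.

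For the second stage I would use divergence-measure fields. For each $k\in\mathbb{R}$ the inequality \eqref{eq:entropy} applied with the semi-Kruzhkov entropy $(\,\cdot\,-k)^+$, together with the conservation-law identity $\partial_t u+\Div_x f(u)=0$ (itself a consequence of \eqref{eq:entropy} with $|k|$ large), says exactly that the space-time vector field $V_k:=\big((u-k)^+,\ \chi_{\{u>k\}}(f(u)-f(k))\big)$ has non-positive measure divergence on the open cylinder. Since $w\mapsto(w-k)^+$ and $w\mapsto\chi_{\{w>k\}}(f(w)-f(k))$ are \emph{continuous}, composing them with the strongly-trace-able function $u$ shows that $V_k$ has strong one-sided traces on the parabolic boundary; consequently the Gauss--Green formula for divergence-measure fields gives, for every non-negative $\zeta\in\mathcal{C}^\infty_c([0,+\infty)\times\mathbb{R}^d)$,
\begin{multline*}
  \int_0^T\!\!\int_\Omega (u-k)^+\partial_t\zeta + \chi_{\{u>k\}}\langle f(u)-f(k)\,|\,\nabla\zeta\rangle\,dx\,dt + \int_\Omega (u_0-k)^+\zeta(0,x)\,dx \\
  \geq \int_0^T\!\!\int_{\partial\Omega}\zeta(t,x)\,\langle \chi_{\{\gamma u>k\}}(f(\gamma u)-f(k))\,|\,\eta(x)\rangle\,d\sigma(x)\,dt .
\end{multline*}
It then suffices to observe the pointwise bound $\langle \chi_{\{\gamma u>k\}}(f(\gamma u)-f(k))\,|\,\eta\rangle\geq-\,\omega^+(x,k,\gamma u)$: both sides vanish when $\gamma u\leq k$, while for $\gamma u>k$ the left side equals $\langle f(\gamma u)-f(k)|\eta\rangle\geq-|\langle f(\gamma u)-f(k)|\eta\rangle|\geq-\omega^+(x,k,\gamma u)$ by definition of $\omega^+$. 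Substituting yields \eqref{eq:plus} with $u_b=\gamma u$, and the symmetric computation with $(k-\,\cdot\,)^+$ and $\omega^-$ yields \eqref{eq:moins}. Hence $u$ solves \eqref{eq:sysBord} in the sense of Definition~\ref{defi:senseComplet}, and uniqueness follows from Theorem~\ref{thm:ACWexistence}.

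The main obstacle is the strong-trace statement of the first stage: one must ensure that the blow-up profile is attained in the \emph{strong} $L^1$ topology along shrinking boundary layers, and it is exactly here that the full force of the non-degeneracy assumption enters, through the averaging lemma --- without it the solution could oscillate arbitrarily fast near $\partial\Omega$ and no trace would exist. The second stage is comparatively soft; its only mildly technical point is the justification of the Gauss--Green identity for the merely bounded, measure-divergence fields $V_k$ up to a curved $\mathcal{C}^2$ boundary, which is handled by flattening together with the standard trace theory for divergence-measure fields.
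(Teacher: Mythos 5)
The paper offers no proof of this statement: it is imported as a (simplified) citation of Vasseur \cite{V} for the existence of strong boundary and initial traces of non-degenerate entropy solutions, combined with the uniqueness part of Theorem~\ref{thm:ACWexistence}. Your two-stage sketch is a correct reconstruction of exactly that argument --- the kinetic blow-up/averaging-lemma proof of the strong traces, then the Gauss--Green verification that $u$ satisfies \eqref{eq:plus}--\eqref{eq:moins} with its own traces $u_0=\gamma u|_{t=0}$, $u_b=\gamma u|_{\partial\Omega}$ as data --- so there is nothing in the paper to contrast it with.
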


\section{Proof of the main result}
\label{sec:proof}

\begin{lemma}\label{lem:stabiliteIntervalle}
    Consider $J:=[A,B] \subset \mathbb{R}$ and suppose that
    \begin{equation*}
        \begin{aligned}
      & u_0(x) \in J, \quad \text{for a.e. } x\in\Omega, \\
      & u_b(t) \in J, \quad \text{for a.e. } t\geq 0.
        \end{aligned}
    \end{equation*}
    Then the unique entropy solution to the IBVP \eqref{eq:sysBord} with initial
    and boundary  data $u_0$ and  $u_b$, $u$  satisfies
    \begin{equation*}
        u(t, x) \in J \quad \text{for a.e. } (t,x)\quad\text{in}\quad (0,+\infty)\times \Omega.
    \end{equation*}
\end{lemma}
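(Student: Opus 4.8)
The plan is to apply the comparison principle \eqref{eq:comparaison} twice, using as auxiliary entropy solutions the constant states $B$ and $A$. First, for any $c\in\mathbb{R}$ the constant function $(t,x)\mapsto c$ is, by the uniqueness part of Theorem~\ref{thm:ACWexistence}, \emph{the} entropy solution of \eqref{eq:sysBord} with data $u_0\equiv c$, $u_b\equiv c$: substituting it into \eqref{eq:plus}--\eqref{eq:moins} and integrating by parts in $x$, everything reduces to the boundary inequalities $\int_{\partial\Omega}\int_0^T\zeta\big(\chi_{\{c>k\}}\langle f(c)-f(k)|\eta(x)\rangle+\omega^+(x,k,c)\big)\,dt\,d\sigma\ge 0$ and its $\omega^-$ analogue, which hold because $c$ and $k$ are endpoints of the interval over which the maximum defining $\omega^\pm(x,k,c)$ is taken, so $|\langle f(c)-f(k)|\eta(x)\rangle|\le\omega^\pm(x,k,c)$.

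For the upper bound we take in \eqref{eq:comparaison} for $u$ the entropy solution associated with $(u_0,u_b)$ and for $v$ the constant $B$, so that $v_0\equiv B$ and $v_b\equiv B$. Since $u_0\le B$ a.e.\ the term $\int_\Omega(u_0(x)-B)^+\zeta(0,x)\,dx$ vanishes, and since $u_b(t,x)\le B$ a.e.\ we have $\min(B,u_b(t,x))=u_b(t,x)$, so $\omega^-(x,u_b(t,x),B)$ is the maximum of $|\langle f(r)-f(s)|\eta(x)\rangle|$ over the degenerate set $\{u_b(t,x)\le r,s\le u_b(t,x)\}$ and therefore equals $0$. Hence \eqref{eq:comparaison} reduces to
\begin{equation*}
    \int_0^T\!\int_\Omega (u-B)^+\,\partial_t\zeta+\chi_{\{u>B\}}\langle f(u)-f(B)|\nabla\zeta\rangle\,dx\,dt\ \ge\ 0
\end{equation*}
for every non negative $\zeta\in\mathcal{C}_c^\infty([0,+\infty)\times\mathbb{R}^d)$, which expresses that $w:=(u-B)^+$ is a weak subsolution with vanishing initial datum. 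One then concludes $w=0$ a.e.\ by the classical argument: test against $\zeta(t,x)=\theta(t)\psi(x)$ with $\theta$ decreasing smoothly from $1$ to $0$ across a time level $s$; if $\Omega$ is bounded take $\psi\equiv 1$ near $\overline{\Omega}$, so the flux term disappears and $\int_\Omega w(s,x)\,dx\le 0$, whence $w(s,\cdot)=0$ for a.e.\ $s$; if $\Omega$ is unbounded, localise by taking $\psi$ supported in a ball contracting along backward cones whose slope is the propagation speed of \eqref{eq:ref}, so that the $\nabla\psi$ contribution has a favourable sign and the same conclusion holds locally.

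The lower bound is symmetric: take in \eqref{eq:comparaison} for $u$ the constant $A$ (data $(A,A)$) and for $v$ the solution associated with $(u_0,u_b)$; now $A\le u_0$ a.e.\ kills the initial term and $A\le u_b(t,x)$ a.e.\ gives $\min(u_b(t,x),A)=A$, hence $\omega^-(x,A,u_b(t,x))=0$, so
\begin{equation*}
    \int_0^T\!\int_\Omega (A-u)^+\,\partial_t\zeta+\chi_{\{u<A\}}\langle f(A)-f(u)|\nabla\zeta\rangle\,dx\,dt\ \ge\ 0,
\end{equation*}
and the same localisation yields $(A-u)^+=0$ a.e. Together the two bounds give $A\le u(t,x)\le B$ for a.e.\ $(t,x)\in(0,+\infty)\times\Omega$.

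The one step deserving genuine care is the passage from the weak differential inequality to the essential pointwise bound when $\Omega$ is unbounded, since then $\int_\Omega w(s,x)\,dx$ need not be finite and one must use finite speed of propagation rather than integrate over all of $\Omega$; the identification of the constants as entropy solutions and the vanishing of the boundary and initial contributions in \eqref{eq:comparaison} are routine verifications from the definitions.
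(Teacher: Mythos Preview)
Your proof is correct and leads to the same conclusion as the paper's. The difference is mainly one of packaging: the paper works directly with the entropy inequality \eqref{eq:plus} applied with $k=B$ (and symmetrically \eqref{eq:moins} with $k=A$), observing that $\omega^+(y,B,u_b(t,y))=0$ and $(u_0-B)^+=0$, and then passes to the limit along test functions approximating $\chi_{(-\infty,\tilde t]}$ to get $\int_\Omega(u(\tilde t,x)-B)^+\,dx\le 0$. You instead invoke the comparison estimate \eqref{eq:comparaison} against the constant solution $v\equiv B$; this requires the preliminary check that the constant is the entropy solution for constant data, and yields the boundary term $\omega^-(x,u_b,B)$ rather than $\omega^+(x,B,u_b)$, but both vanish for the same reason (the interval of maximisation collapses to a point). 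After that, your test-function argument to extract the pointwise bound is identical in spirit to the paper's limiting procedure. Your route is slightly more indirect, but it has the merit of being a bit more explicit about the unbounded-$\Omega$ case, which the paper's choice of test functions $\zeta_n(t)$ leaves implicit.
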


\begin{proof} We prove here in full details that $u(t,x)\leq B$  for a.e. $(t,x)$ in  $(0,+\infty)\times \Omega$. The inequality $A\leq u(t,x)$ can be obtained analogously.

    By hypothesis we have for almost every $x$ in $\Omega$ and $(t,y)$ in $(0,+\infty)\times
    \partial \Omega$ 
    \begin{equation*}
        u_0(x)\leq B,\qquad u_b(t,y)\leq B,
    \end{equation*}
    then for  any fixed  time $\tilde t\geq 0 $, taking a sequence  $\zeta_n\in
    \mathcal{C}_c^\infty(\mathbb{R})\to \chi_{(-\infty,\tilde t]}$
    and using $k=B$,  from \eqref{eq:plus} we obtain
    \begin{equation}\label{eq:prooflemma}
        \int_{\partial \Omega}\int_0^{\tilde t} {\omega^+(y,B,u_b(t,y))\,dt\,d\sigma(y)}
        +\int_\Omega{(u_0(x)-B)^+-(u(\tilde t,x)-B)^+ \,dx}\geq 0. 
    \end{equation}
    It is clear that for a.e.  $x$ in $\Omega$ and $(t,y)$ in $(0,\tilde{t})\times \partial
    \Omega$ we have
    \begin{equation*}
        \omega^+(y,B,u_b(t,y))=0,\qquad (u_0(x)-B)^+=0,
    \end{equation*}
    then \eqref{eq:prooflemma} implies 
    \begin{equation*}
        (u(\tilde t,x)-B)^+=0,\quad \text{for a.e. $x$ in $\Omega$}
    \end{equation*}
    which is indeed
    \begin{equation*}
        u(\tilde t,x)\leq B,\quad\text{for a.e. $x$ in $\Omega$}.
    \end{equation*}
\end{proof}

\begin{e-proposition}\label{prop:syncro}
    Let $u$ and $v$ be entropy solutions of  \eqref{eq:sysBord}  with respective initial data $u_0$ and $v_0$ and the same boundary
    datum $u_b$.    Let us also suppose that all data take value in an interval $I$ which
    satisfies the
    replacement condition in time $\mathfrak{t}=\frac{L}{c}$ (with a direction $w$). Then we can conclude that
    \begin{equation*}
        \forall t\geq \mathfrak{t},\qquad u(t,x)=v(t,x),\qquad  \text{for almost every $x$ in $\Omega$}.
    \end{equation*}
\end{e-proposition}

\begin{proof}
    Let us define for $\theta>0$ the functional $J_\theta$ by
    \begin{equation*}
        \forall t\geq 0,\qquad J_\theta(t):=\int_\Omega{|u(t,x)-v(t,x)|e^{-\theta \langle
        w|x\rangle }dx}.
    \end{equation*}
    Given $\bar t \geq 0$, we apply \eqref{eq:comparaison} to the ordered couples $(u,v)$ and  $(v,u)$, then  adding the inequalities we
    get
    \begin{multline*}
       \int_0^{\bar t} \int_\Omega
      \left((v(t,x)-u(t,x))^++(u(t,x)-v(t,x))^+\right)\partial_t\zeta(t,x)\\
       + \langle (\chi_{\{v(t,x)> u(t,x)\}} f(v(t,x))-f(u(t,x)))+(\chi_{\{u(t,x)> v(t,x)\}} f(u(t,x))-f(v(t,x)))|\nabla
      \zeta(t,x)\rangle \,dx\,dt\\
      +\!\int_{\partial \Omega}\!\int_0^{\bar t}
      {2\omega^-(x,u_b(t,x),u_b(t,x))\zeta(t,x)\,dt\,d\sigma(x)}\\
      +\!\int_\Omega {((v_0(x)-u_0(x))^++(u_0(x)-v_0(x))^+)\zeta(0,x)dx}\!\geq 0.
    \end{multline*}
    which is actually
    \begin{multline*}
        \int_0^{\bar t} \int_\Omega{|u(t,x)\!-\!v(t,x)|\partial_t \zeta(t,x)+ \s(u(t,x)\!-\!v(t,x))\langle
        f(u(t,x))\!-\!f(v(t,x))|\nabla \zeta(t,x)\rangle \,dx\,dt}\\
        + \int_\Omega{|u_0(x)-v_0(x)|\zeta(0,x)\,dx}\geq 0.
    \end{multline*}
    We consider a sequence  $\left(\zeta_n\right)_n\subset \mathcal{C}_c^\infty(\mathbb{R})$
    converging in $L^1$ to  $\chi_{(-\infty,\bar t]}e^{-\theta\langle w|x\rangle}$, so that in the limit $n\to \infty$ we get 
    \begin{equation}\label{eq:egalite}
        J_\theta(\bar t)\leq J_\theta(0)+\int_0^{\bar t}{\int_\Omega{\s(u(t,x)-v(t,x))\langle f(u(t,x))-f(v(t,x)))|-w
        \theta e^{-\theta\langle w|x\rangle}\rangle \,dx}\,dt}.
    \end{equation}
    But since 
    \begin{align*}
        \forall (a,b)\in I^2,\qquad  \s(a-b)\langle f(a)-f(b)|w\rangle
        &=\s(a-b)\langle \int_0^1{f'(b+s(a-b))ds}(a-b)|w\rangle\notag\\
        & =\s(a-b)(a-b)\int_0^1{\langle f'(b+s(a-b))|w\rangle ds}\notag\\
        &\geq|a-b|\int_0^1{c\ ds}\notag\\
        &\geq c |a-b|\label{eq:calcul},
    \end{align*}
    from \eqref{eq:egalite}, Lemma~\ref{lem:stabiliteIntervalle} and the replacement
    condition, we obtain 
    \begin{equation*}
        J_\theta(\bar t)\leq J_\theta(0)-\theta c\int_0^{\bar t}{J_\theta(s)ds}.
    \end{equation*}
    Thanks to the classical Gronwall's lemma we end up with
    \begin{equation*}
        J_\theta(\bar t)\leq e^{-c\theta \bar t} J_\theta(0).
    \end{equation*}
    As $\bar t$ was arbitrarily chosen, if  $M:=\underset{x\in \Omega}{\sup}\ \langle w|x\rangle $ and $m=\underset{x\in \Omega}{\inf}\ \langle w|x\rangle $, we can write that for all $t\geq 0$
    \begin{equation*}
        \|u(t)-v(t)\|_{L^1(\Omega)}e^{-\theta M}\leq J_\theta(t)\leq
        \|u(t)-v(t)\|_{L^1(\Omega)}e^{-\theta m}.
    \end{equation*}
    So we can compute 
    \begin{align*}
        \|u(t)-v(t)\|_{L^1(\Omega)}&\leq e^{\theta M}J_\theta(t)\\
                                   &\leq e^{\theta M-\theta c t} J_\theta(0)\\
                                   & \leq e^{-\theta c(\frac{M-m}{c}-t)}\|u_0-v_0\|_{L^1(\Omega)}\\
                                   & \leq e^{-\theta c(\frac{L}{c}-t)}\|u_0-v_0\|_{L^1(\Omega)}.
    \end{align*}
    So for any $t\geq \mathfrak{t} = \dfrac{L}{c}$ letting $\theta \to +\infty$ we obtain
    \begin{equation*}
        u(t,x)=v(t,x)\qquad \text{for almost every $x$ in $\Omega$}.
    \end{equation*}
\end{proof}

We are ready to prove Theorem~\ref{th:main}.

\begin{proof}
    We aim at proving that there exists an entropy solution to the problem
    \begin{equation*}
        \begin{cases}
            \partial_t u +\Div_x \left(f(u)\right) = 0, &\quad \text{in } (0, T_1)\times \Omega\\
            u(0, x) = u_0(x), &\quad \text{on }\Omega, \\
            u(T_1, x) = v(T_2, x), &\quad \text{on }\Omega. \\
        \end{cases}
    \end{equation*}

    In view of the well-posedness result stated in Theorem~\ref{thm:ACWexistence} our goal is achieved once we construct suitable boundary conditions, which can be interpreted as controls in our setting.

    \noindent\textbf{Case $T_2>T_1$. }

    Thanks to Theorem~\ref{thm:Vasseur} it makes sense to consider 
    \begin{align*}
     & w_0(x) = v(T_2-T_1, x), &\qquad \text{for a.e. } x\in\Omega, \\
     & w_b(s, x ) = v(T_2-T_1 + s, x),  &\qquad \text{for a.e. } x\in \partial\Omega\quad \text{and } s\geq 0.
    \end{align*}

    We call $w$ the unique entropy solution to the IBVP \eqref{eq:sysBord} with data $w_0$, $w_b$ on $(0,T_1)\times \Omega$. The form of the equation implies that for almost every $(s,x)$ in $(0,T_1)\times \Omega$, $w(s,x)  = v(T_2-T_1+s, x)$.

    By hypothesis $T_1\geq \mathfrak{t}$ so, as a direct application of Proposition~\ref{prop:syncro}, we can conclude that the entropy solutions to the mixed problems of the form \eqref{eq:sysBord}  with initial data $u_0$ and $w_0$ respectively,  and common boundary datum $w_b$ satisfy
    \begin{equation*}
        u(T_1, x) = w(T_1, x) \qquad \text{for a.e.} x\in\Omega,
    \end{equation*}
    which means
    \begin{equation*}
        u(T_1, x) = v(T_2, x) \qquad \text{for a.e.} x\in\Omega.
    \end{equation*}

    \noindent\textbf{Case $T_1>T_2$. } 

    We define
    \begin{align*}
    & w_0(x) = v(T_2-\mathfrak{t}, x), &\qquad \text{for a.e. } x\in\Omega, \\
    & w_b(s, x ) = v(T_2-\mathfrak{t} + s, x),  &\qquad \text{for a.e. } x\in
    \partial\Omega\quad \text{and } s\geq 0,
    \end{align*}
    where $\mathfrak{t}$ is the time given by the replacement condition.

    We call $w$ the unique entropy solution to the IBVP \eqref{eq:sysBord} with data $w_0$, $w_b$ on $(0,\mathfrak{t})\times \Omega$. The form of the equation implies that for almost every $(s,x)$ in $(0,\mathfrak{t})\times \Omega$, $w(s,x)  = v(T_2-\mathfrak{t}+s, x)$.

    We consider now a boundary condition  of the following form
    \begin{equation*}
        u_b(t, x) = 
        \begin{cases}
            \mathfrak{b}, \qquad &\text{for } t \in (0, T_1- \mathfrak{t}) , \quad x\in\partial\Omega,\\
            w_b(t-(T_1- \mathfrak{t}), x ) \qquad &\text{for } t \in ( T_1- \mathfrak{t}, +\infty) , \quad x\in\partial\Omega,  
        \end{cases}
    \end{equation*}
    where $\mathfrak{b}$ is any constant state in the interval $I$.

    The IBVP \eqref{eq:sysBord} with data $u_0$, $u_b$ admits a unique entropy solution $u$ in $(0,+\infty) \times\Omega$.

    We call $\tilde u_0$ the profile of $u$ at time $ t = T_1- \mathfrak{t}$.

    Now it is clear that if we apply Proposition~\ref{prop:syncro} to the entropy solutions $\tilde u$ et $w$ of \eqref{eq:sysBord} with respective initial data $\tilde u_0$ et $w_0$ and common boundary data $w_b$ we obtain
    \begin{equation*}
        \tilde u (\mathfrak{t}, x) = w(\mathfrak{t}, x), \qquad \text{for a.e. } x\in\Omega,
    \end{equation*}
    which means
    \begin{equation*}
        u (T_1, x) = v(T_2, x), \qquad \text{for a.e. } x\in\Omega.
    \end{equation*}

\end{proof}

\section*{Acknowledgements}
The first author acknowledges the financial support of the Région Franche-Comté, projet \emph{Analyse mathématique et simulation numérique d'EDP issus de problèmes de contrôle et du trafic routier}. The second author was supported by ANR Finite4SOS (15-CE23-0007).

\end{document}